\title{Continuity of measurable cocycles}
\author {Christian Rosendal}
\address{Department of Mathematics\\University of Maryland\\4176 Campus Drive - William E. Kirwan Hall\\College Park, MD 20742-4015\\USA}
\email{rosendal@umd.edu}
\urladdr{sites.google.com/view/christian-rosendal/}
\newcommand{\forkindep}[1][]{\mathop{\mathop{\vcenter{\hbox{\oalign{\noalign{\kern-.3ex}
\hfil$\vert$\hfil\cr\noalign{\kern-.7ex}$\smile$\cr\noalign{\kern-.3ex}}}}}\displaylimits_{#1}}}
\newcommand{\maths}[1]{\[\begin{split}{#1}\end{split}\]}
\newcommand{\maps}[1]{\mathop{\overset{#1}\longrightarrow}}
\newcommand{\acts}[1]{\mathop{\overset{#1}\curvearrowright}}
\newcommand{\Mgd}[2]{\big\{  {#1}\;\big|\; {#2} \big\} }
\tikzset{negated/.style={
        decoration={markings,
            mark= at position 0.5 with {
                \node[transform shape] (tempnode) {$\backslash$};
            }
        },
        postaction={decorate}
    }
}
\newcommand {\R}{\mathbb R}
\newcommand{\tom} {\emptyset}
\newcommand{\saa}{\Rightarrow}
\newcommand{\equi}{\Leftrightarrow}
\newcommand {\del}{ \; \big| \;}
\newcommand {\ku} {\mathcal}
\newcommand{\ov}{\overline}
\newcommand{\inv}{^{-1}}
\newcommand {\e} {\exists}
\renewcommand {\a} {\forall}
\theoremstyle{plain}
\newtheorem{thm}{Theorem}[section]
\newtheorem*{theorem*}{Theorem}
\newtheorem{cor}[thm]{Corollary}
\newtheorem{lemme}[thm]{Lemma}
\theoremstyle{definition}
\newtheorem{rem}[thm]{Remark}
\definecolor{groen}{rgb}{0,0.5,.7}
\definecolor{gul}{rgb}{0.94,0.8,0}
\definecolor{blaa}{rgb}{0.16,0,0.6}
\definecolor{roed}{rgb}{1,0,0}
\thanks{The author was partially supported by the U.S. National Science Foundation under Grant Number DMS-2246986. Thanks are also due to Uri Bader and  Rémi Barritault for interesting comments and helpful conversations.}
\begin{document}

\begin{abstract}
Suppose $G\curvearrowright X$ is a Polish group action, $H$ is a Polish group and  $G\times X\maps\psi H$ is a cocycle that is continuous in the second variable. If $\psi$ is either Baire measurable or is $\lambda\times \mu$-measurable with respect to a Haar measure $\lambda$ on $G$ and a fully supported $\sigma$-finite Borel measure $\mu$ on $X$, then $\psi$ is jointly continuous.
\end{abstract}

\maketitle


\section{Introduction}
 This paper establishes some new instances of topological rigidity of cocycles in the context of Polish groups and their actions. For the setup, let us recall that by a {\em Polish group action} we understand a continuous action $G\curvearrowright X$  by a Polish group $G$ on a Polish space $X$, that is, such that the action map $G\times X\to X$ is jointly continuous. Also, if $G\curvearrowright X$ is any action by a group $G$ on a set $X$, a  group valued {\em cocycle} is a map $G\times X\maps\psi H$ with values in another group $H$ such that
$$
\psi(gf,x)=\psi(g,fx)\cdot\psi(f,x)
$$
for all $g,f\in G$ and $x\in X$. For example, if $X\maps \phi H$ is any function, then the differential
$$
d\phi(g,x)=\phi(gx)\inv\phi(x)
$$
defines a so called {\em trivial} cocycle, but general cocycles need not be that simple. The principal example of a nontrivial cocycle comes from calculus. Namely, if we let ${\sf Diff}^1(\R)$ denote the group of $C^1$-diffeomorphisms of $\R$, then the chain rule simply expresses that the map 
$$
(f,x)\in {\sf Diff}^1(\R)\times \R\mapsto f'(x)\in \R^\times
$$
is a cocycle for the tautological action of ${\sf Diff}^1(\R)$ on $\R$. 

There are several results in the literature on the continuity of cocycles under various notions of measurability (see, for example, \cite{becker}). Our main result, however, is most closely related to a recent result by T. Meyerovitch and O. N. Solan \cite[Theorem 1.6]{meyerovitch}. 

To state this precisely, suppose $G\times X\maps\psi H$ is a cocycle for a Polish group action $G\curvearrowright X$ with values in a topological group $H$. Then, for fixed $g\in G$ and $x\in X$, we may define the section maps $X\maps {\psi_g}H$ and $G\maps{\psi_x}H$ by $\psi_g(z)=\psi(g,z)$ and $\psi_x(f)=\psi(f,x)$ respectively. We say that  $\psi$ is  {\em continuous in the second variable} if the maps $X\maps {\psi_g}H$ are continuous for all $g\in G$. Similarly for the other variable or for notions of measurability.
Let us remark that the relationship between joint and separate measurability is in general more subtle than that of continuity. Indeed, if a map $G\times X\maps\psi H$ is continuous, then it is automatically continuous in each variable. However, if $G\times X\maps\psi H$ is only assumed to be Baire measurable, then it need not be Baire measurable in any of the two variables. Similarly, if $G\times X\maps\psi H$ is just assumed to be measurable with respect to a product $\lambda\times \mu$ of $\sigma$-finite Borel measures on $G$ and $X$, then it need not be measurable in any of the two variables.

In this language, \cite[Theorem 1.6]{meyerovitch} states that, if $G$, $H$ and $X$ are locally compact Polish,   $G\times X\maps\psi H$ is a cocycle for a continuous action $G\curvearrowright X$, $\psi$ is assumed to be continuous in the second variable and $\psi_x$ is Haar measurable for every $x\in X$, then $\psi$ is continuous.  We extend this result in multiple directions.
\begin{thm}\label{thm:cocyclemain}
Let $G\curvearrowright X$ be a  Polish group action and suppose that $G\times X\maps\psi H$ is a cocycle with values in a Polish group $H$. Assume also that $\psi$ is continuous in the second variable. Then $\psi$ is continuous in each of the following cases. 
\begin{enumerate}
\item $\psi$ is Baire measurable,
\item $\psi_x$ is Baire measurable  for a dense set of $x\in X$,
\item $G$ is locally compact and $\psi_x$ is Haar measurable  for a dense set of $x\in X$,
\item $G$ is locally compact with Haar measure $\lambda$ and there is a fully supported $\sigma$-finite Borel measure $\mu$ on $X$ such that $\psi$ is $\lambda\times \mu$-measurable.
\end{enumerate}
\end{thm}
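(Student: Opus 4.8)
The plan is to peel off the structure of the problem in stages until what remains is a Pettis/Steinhaus--Weil type statement about a single section $\psi_x\colon G\to H$, and then to use the cocycle identity together with continuity in the second variable to propagate the regularity so obtained. First some reductions of the conclusion. From the cocycle identity one reads off $\psi(e_G,x)=e_H$ and $\psi(g^{-1},gx)=\psi(g,x)^{-1}$. Writing $\psi(g,x)=\psi(gg_0^{-1},g_0x)\cdot\psi(g_0,x)$ and using that $\psi_{g_0}$ is continuous, one checks that $\psi$ is jointly continuous at $(g_0,x_0)$ as soon as it is continuous at $(e_G,g_0x_0)$, so it suffices to prove that $\psi$ is continuous at every point of the identity slice $\{e_G\}\times X$. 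Moreover, a short computation with the cocycle identity (again using continuity in the second variable, applied to $\psi_{g_0}$ and $\psi_{g_0^{-1}}$) shows that the set $C=\{y\in X:\psi\text{ is continuous at }(e_G,y)\}$ is saturated, i.e.\ $y\in C\Rightarrow Gy\subseteq C$; hence it is enough to produce $S\subseteq C$ with $GS=X$.

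Next I reduce among the four cases. If $\psi$ is continuous in the second variable and $\psi_x$ is Baire (resp.\ Haar) measurable for a dense set of $x$, then $\psi_x$ is Baire (resp.\ Haar) measurable for \emph{every} $x$, being a pointwise limit of such sections; and a standard argument (approximate the $X$-variable by a countable net and use continuity of the maps $\psi_g$) then upgrades this to joint Baire measurability of $\psi$ on $G\times X$ (resp.\ joint measurability with respect to $\lambda$ and the Borel structure of $X$). Thus (2) reduces to (1) and (3) reduces to (4); conversely (1) yields (2) and (4) yields (3) by the Kuratowski--Ulam theorem and by Fubini's theorem. So it suffices to treat the two ``joint'' hypotheses, with the standing assumption that $\psi$ is continuous in the second variable.

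Now the core step: $C$ contains a dense open set. Fix a neighbourhood $V$ of $e_H$ and a symmetric open $V_0$ with $\overline{V_0}\,\overline{V_0}\subseteq V$ (constants suppressed), and cover $H$ by countably many \emph{right} translates $V_0h_n$ — the choice of side is what makes the $h_n$'s cancel below. Then $G\times X=\bigcup_n\psi^{-1}(V_0h_n)$, so some $B:=\psi^{-1}(V_0h_n)$ is non-meager with the Baire property in case (1), resp.\ of positive $\lambda\times\mu$-measure in case (4). Applying the Kuratowski--Ulam theorem (resp.\ Fubini's theorem) together with continuity of the sections $\psi_g$, one extracts a non-meager set (resp.\ a positive Haar measure set) $G_1\subseteq G$ and a point $x$ — in case (1) even an open set of admissible $x$ — with $\psi(g,x)\in\overline{V_0}h_n$ for all $g\in G_1$. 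By Pettis' theorem (resp.\ the Steinhaus--Weil theorem, which is where local compactness of $G$ enters) $G_1G_1^{-1}$ contains a neighbourhood $U$ of $e_G$, and for $u=g_1g_2^{-1}\in G_1G_1^{-1}$ the cocycle identity gives $\psi(u,g_2x)=\psi(g_1,x)\psi(g_2,x)^{-1}\in\overline{V_0}h_nh_n^{-1}\overline{V_0}^{-1}\subseteq V$. Moving the base point $g_2x$ back by continuity of $\psi_u$, and running the whole argument over every nonempty open piece of $X$ (in case (1)) or using a Haar-density-point argument in $G$ (in case (4)), one concludes that $C$ contains a dense open set; combined with saturation, $C$ contains a dense open $G$-invariant set $O$, and by the first paragraph $\psi$ is jointly continuous on $G\times O$.

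The main obstacle — the step I expect to be genuinely delicate — is upgrading this to $C=X$, that is, reaching the closed nowhere dense (resp.\ null) $G$-invariant set $F=X\setminus O$, on which the category (resp.\ measure) hypothesis on $\psi$ is mute. I would handle this by a transfinite recursion along the Cantor--Bendixson-type derivative of $F$ under the action, working at each stage $F_\alpha$ with the sections $\psi_y$, $y\in F_\alpha$, rather than with $\psi$ itself: such a section is Baire measurable because it is a pointwise limit of continuous sections $\psi_{y'}$ with $y'\to y$ and $y'$ in the previously handled (dense, relatively open in $F_\alpha$) set, so a Pettis/Steinhaus--Weil argument carried out inside the Polish group $G$ — done for a dense set of $y\in F_\alpha$ and transported to the remaining $y$ by a Banach--Mazur game played in $G$ — yields continuity of $\psi_y$ at $e_G$ for all $y$ in a dense relatively open subset of $F_\alpha$, whence, via the cocycle identity and continuity in the second variable, continuity of $\psi$ at $(e_G,\cdot)$ there. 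Since $X$ is second countable the recursion exhausts $F$ in countably many steps, giving $C=X$, and then the first paragraph delivers joint continuity of $\psi$ in all four cases.
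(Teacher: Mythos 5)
Your overall shape---Pettis/Steinhaus--Weil plus the cocycle identity to propagate continuity points---is the right one, and your opening reductions (continuity at $(e_G,y)$ suffices; the set of such $y$ is $G$-saturated; all sections $\psi_x$ are measurable once a dense set of them are, by the pointwise-limit/continuity-in-the-second-variable argument) correspond to actual lemmas in the paper. But there are two concrete gaps. First, in the core step the identity $\psi(g_1g_2^{-1},g_2x)=\psi(g_1,x)\psi(g_2,x)^{-1}$ places the small value of $\psi$ at the base point $g_2x$, which varies with $u=g_1g_2^{-1}$; to conclude continuity of $\psi$ at a fixed point you need $\psi(u,y)\in V$ for all $u$ in a neighbourhood of $e_G$ with one and the same $y$. ``Moving the base point back by continuity of $\psi_u$'' does not achieve this: the neighbourhood of $y$ on which $\psi_u$ stays close to $\psi_u(y)$ depends on $u$, and $u$ ranges over an uncountable set, so there is no uniformity. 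The paper's fix is to uniformise one of the two Pettis factors over a fixed open set $V\ni f$ in $G$ before multiplying---working with $U_1\cap\bigcap_{k\in V\cap\mathcal{G}}\psi_{kx}^{-1}(h_2W')$ for a countable dense $\mathcal{G}\subseteq G$---so that the product of the two nonmeagre sets lands in $\psi_x^{-1}(h_2h_1W^2)$ with the single base point $x$. Some such uniformisation is unavoidable and is missing from your sketch.

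Second, the transfinite recursion intended to reach the closed nowhere dense (or null) $G$-invariant remainder $F$ is not a proof: a ``Pettis argument carried out inside $G$'' on a single section $\psi_y$ cannot work as stated, because $\psi_y$ is not a homomorphism and every application of the cocycle identity moves the base point off $y$; the appeal to a Banach--Mazur game is not substantiated. This difficulty is an artifact of your decision to reduce the sectionwise hypotheses (2), (3) to the joint ones (1), (4) and then run the category/measure argument in $G\times X$, where a nonmeagre (or non-null) set can indeed avoid $G\times F$ entirely. The paper reduces in the opposite direction---(1) to (2) via Kuratowski--Ulam and (4) to (3) via Fubini---then uses that $\psi_z$ is Baire (resp.\ Haar) measurable for \emph{every} $z$ and runs the Pettis argument in $G$ alone for each fixed $x$, using the sections $\psi_{kx}$ at points of the orbit of $x$, to show that every $\psi_x$ has a continuity point; a cocycle lemma upgrades this to continuity of $\psi_x$ everywhere, and joint continuity then follows from separate continuity via a Namioka-type theorem (Kechris, Theorem 8.51) together with one more orbit-propagation lemma. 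With that restructuring no residual set ever appears, and no transfinite recursion is needed.
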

Observe, in particular, that case (3) is a direct extension of \cite[Theorem 1.6]{meyerovitch} discarding the hypothesis of local compactness of $H$ and $X$ and weakening the measurability assumption on $\psi$. Note also that the measure $\mu$ in (4) is not assumed to be invariant or even quasi-invariant under the $G$-action.

\begin{rem}\label{remark}
For completeness and later use, we mention that the assumption of continuity in the second variable can be slightly weakened. In fact, it suffices to assume that $\psi_g$ is continuous for all $g$ belonging to a set $\Sigma$ that generates $G$. For, if $\psi_g$ and $\psi_f$ are continuous, so are $\psi_{gf}=\psi_g(f\,\cdot\,)\psi_f(\cdot)$ and $\psi_{g\inv}=\big(\psi_g(g\inv\,\cdot\,)\big)\inv$. Thus, for example, it is enough to assume that $\psi_g$ is continuous for a comeagre set of $g\in G$ or, in the case of locally compact groups, a conull set of $g\in G$.
\end{rem}


Even in the case of trivial cocycles, Theorem \ref{thm:cocyclemain} provides nontrivial information, assuming that we restrict ourselves to transitive Polish group actions $G\acts{}X$. Observe first that, if $X\maps\phi H$ and $g\in G$, we may define the {\em directional differential} in the direction of $g$ to be the section map $d_g\phi=(d\phi)_g$. Since $d_g\phi$ is itself a function from $X$ to $H$, this means that we can iterate this procedure to get higher order directional differentials. Thus, for example,
\maths{
d_g\phi(x)&=\phi(gx)\inv\phi(x),\\
d_fd_g\phi(x)&=\phi(fx)\inv\phi(gfx)\phi(gx)\inv\phi(x),\\
d_kd_fd_g\phi(x)&=\phi(kx)\inv\phi(gkx)\phi(gfkx)\inv\phi(fkx)  \phi(fx)\inv\phi(gfx)\phi(gx)\inv\phi(x).
}

\begin{thm}\label{thm:differentials}
Suppose $G\acts{}X$ is a transitive Polish group action and that $X\maps\phi H$ is a Baire measurable function with values in a Polish group $H$.
Then the following conditions are equivalent.
\begin{enumerate}
\item $\phi$ is continuous,
\item $d\phi$ is continuous,
\item there is a comeagre set of $g\in G$ such that $d_g\phi$ is continuous,
\item for every proper subgroup $F<G$, there is some $g\in G\setminus F$ so that $d_g\phi$ is continuous,
\item for any infinite sequence $g_1,g_2,\ldots \in G$, there is some $n$ so that
$$
d_{g_n}d_{g_{n-1}}\cdots d_{g_1}\phi
$$
is continuous.
\end{enumerate}
\end{thm}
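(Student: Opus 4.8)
The entire theorem will be driven by a single lemma: \emph{if $\theta\colon X\to H$ is Baire measurable and $d_g\theta$ is continuous for every $g$ in a set $\Sigma$ generating $G$, then $\theta$ is continuous.} To prove this, observe that $d\theta$ is a (trivial) cocycle for $G\curvearrowright X$ whose second-variable sections are $(d\theta)_g=d_g\theta$. These are continuous for $g\in\Sigma$, hence for all $g\in G$ by Remark~\ref{remark}, so $d\theta$ is continuous in the second variable. Now fix $x\in X$ and let $\pi_x\colon G\to X$, $\pi_x(g)=gx$, be the orbit map. Since the action is transitive, $\pi_x$ is a continuous \emph{open} surjection (the standard fact, provable via the Baire category theorem, that a transitive Polish group action is the quotient action on $G/G_x$); in particular $\pi_x$ pulls meagre sets back to meagre sets, so $\theta\circ\pi_x$ is Baire measurable, and therefore so is the section $(d\theta)_x\colon g\mapsto(\theta\circ\pi_x)(g)\inv\,\theta(x)$, using that composing a Baire measurable $H$-valued map with inversion and with a fixed translation of $H$ preserves Baire measurability. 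Thus $(d\theta)_x$ is Baire measurable for every $x\in X$, and Theorem~\ref{thm:cocyclemain}(2), applied to the cocycle $d\theta$, gives that $d\theta$ is jointly continuous. Finally, fix a basepoint $x_0$; by transitivity every point of $X$ has the form $gx_0$, the map $g\mapsto\theta(gx_0)=\theta(x_0)\cdot d\theta(g,x_0)\inv$ is continuous, and since it factors through the quotient map $\pi_{x_0}$ we conclude that $\theta$ is continuous.

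With the lemma available, the equivalences run around the cycle $(1)\Rightarrow(2)\Rightarrow(3)\Rightarrow(4)\Rightarrow(1)$, together with $(1)\Rightarrow(5)\Rightarrow(1)$. For $(1)\Rightarrow(2)$: if $\phi$ is continuous, then $(g,x)\mapsto\phi(gx)\inv\phi(x)=d\phi(g,x)$ is continuous, since the action map and the operations of $H$ are continuous. The implication $(2)\Rightarrow(3)$ is trivial. For $(3)\Rightarrow(4)$, write $C=\{g\in G:d_g\phi\text{ continuous}\}$, which is comeagre, and let $F<G$ be proper; if $C\subseteq F$, then $F$ is comeagre, hence non-meagre with the Baire property, hence open by Pettis's theorem, hence clopen, so the open meagre set $G\setminus F$ would be empty, contradicting $F\neq G$; thus $C\setminus F\neq\emptyset$. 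For $(4)\Rightarrow(1)$, set $\Sigma=\{g\in G:d_g\phi\text{ continuous}\}$; if $\langle\Sigma\rangle$ were proper, $(4)$ would furnish $g\in G\setminus\langle\Sigma\rangle$ with $d_g\phi$ continuous, i.e. $g\in\Sigma\subseteq\langle\Sigma\rangle$, which is absurd, so $\Sigma$ generates $G$ and the lemma yields $(1)$.

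It remains to fold in $(5)$. For $(1)\Rightarrow(5)$: if $\phi$ is continuous, then by induction on $n$ each $d_{g_n}\cdots d_{g_1}\phi$ is continuous (because $d_g$ sends continuous functions to continuous functions), so $n=1$ always works. For $(5)\Rightarrow(1)$ we argue contrapositively. First note that $d_g$ maps Baire measurable maps $X\to H$ to Baire measurable maps (each $z\mapsto gz$ is a homeomorphism of $X$ and the operations of $H$ preserve Baire measurability), so iterating $d_g$ keeps us among Baire measurable functions. If $\phi$ is not continuous, the lemma applied with $\Sigma=G$ produces $g_1$ with $d_{g_1}\phi$ not continuous; since $d_{g_1}\phi$ is Baire measurable and not continuous, the lemma applied to it produces $g_2$ with $d_{g_2}d_{g_1}\phi$ not continuous; recursively we obtain $g_1,g_2,\ldots\in G$ with $d_{g_n}\cdots d_{g_1}\phi$ not continuous for every $n$, contradicting $(5)$. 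The main obstacle is the lemma, and within it the verification of the two hypotheses of Theorem~\ref{thm:cocyclemain}(2) for the auxiliary cocycle $d\theta$: one needs that orbit maps of transitive Polish group actions are category-preserving (so that the $G$-sections of $d\theta$ are Baire measurable) and that Baire measurability is stable under the elementary operations used throughout. Everything else is soft.
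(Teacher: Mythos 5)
Your proposal is correct and follows essentially the same route as the paper: both reduce the theorem to the main cocycle theorem applied to the trivial cocycle $d\phi$, use Effros' theorem (openness of the orbit map) to pass between continuity of $\phi$ and of $d\phi$, derive (3) and (4) from the fact that the continuity directions generate $G$, and prove (5)$\Rightarrow$(1) by the same recursive construction. The only (harmless) variation is that you verify Baire measurability of the sections $(d\phi)_x$ via the orbit maps and invoke Theorem~\ref{thm:cocyclemain}(2), whereas the paper verifies joint Baire measurability of $d\phi$ via the action map and invokes Theorem~\ref{thm:cocyclemain}(1); since the paper deduces part (1) from part (2) anyway, the two routes coincide in substance.
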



Similarly to \cite[Theorem 1.6]{meyerovitch}, Theorem \ref{thm:differentials} has consequences for the notion of  polynomial maps between groups, which is  due to A. Leibman \cite{leibman} and is especially useful in the setting of nilpotent groups, but which applies to all groups in general.

Suppose $G\maps\phi H$ is a map between two groups. Viewing $G$ as  a $G$-space under the action $G\curvearrowright G$ by left multiplication, we may define the differential $G\times G \maps{d\phi}H$ as before. 
For $k\geqslant 0$,  a map $G\maps\phi H$ between two groups is said to be a {\em polynomial of degree $\leqslant k$} in case
$$
d_{g_k}\cdots d_{g_0}\phi\;\equiv\; 1_H
$$
for all $g_0,\ldots, g_k\in G$. More generally, we may  define $G\maps\phi H$ to be a {\em polynomial} of potentially  transfinite degree provided that, for all $g_0,g_1,g_2,\ldots\in G$, there is a $k$ such that  
$$
d_{g_k}\cdots d_{g_0}\phi\;\equiv\; 1_H.
$$ 
It follows immediately from the definition that $\phi$ has degree $\leqslant 0$ if and only if $\phi$ is a constant map. Similarly, $\phi$ has degree $\leqslant 1$ if and only if $\phi(1)\phi(\,\cdot\,)\inv$ is a group homomorphism.

Because constant maps $G\to H$ are obviously continuous, the following is an immediate consequence of Theorem \ref{thm:differentials}.
\begin{cor}
Every Baire measurable polynomial  $G\maps\phi H$  between Polish groups is continuous.
\end{cor}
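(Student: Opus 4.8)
The plan is to deduce this directly from Theorem~\ref{thm:differentials}, applied to the left-translation action $G\curvearrowright G$. First I would check that this is a transitive Polish group action: the space $X=G$ is Polish because $G$ is, the action map is the multiplication $G\times G\to G$, which is jointly continuous since $G$ is a topological group, and left translation is obviously transitive. Together with the standing hypotheses that $H$ is Polish and that $\phi\colon G\to H$ is Baire measurable, this places us exactly in the setting of Theorem~\ref{thm:differentials}, so it suffices to verify one of the equivalent conditions (2)--(5) there.

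I would verify condition (5). Note first that the iterated directional differentials occurring in (5) are formed with respect to this same left-translation action, so they coincide, term by term, with the iterated differentials appearing in the definition of a polynomial map. Now fix an arbitrary sequence $g_1,g_2,\ldots\in G$ and apply the definition of ``polynomial of potentially transfinite degree'' to the shifted sequence $h_i:=g_{i+1}$ (for $i\geqslant 0$): there is some $k$ with
\[
d_{h_k}\cdots d_{h_0}\phi\;=\;d_{g_{k+1}}\cdots d_{g_1}\phi\;\equiv\;1_H.
\]
Setting $n:=k+1$, the map $d_{g_n}d_{g_{n-1}}\cdots d_{g_1}\phi$ is the constant map with value $1_H$, hence continuous. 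Thus condition (5) of Theorem~\ref{thm:differentials} is satisfied, and that theorem yields the continuity of $\phi$.

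There is no genuine obstacle here; the corollary really is immediate. The only points requiring a moment of care are the verification that left translation meets the hypotheses of Theorem~\ref{thm:differentials} (routine) and the bookkeeping needed to match the indexing in condition (5) with that in the definition of polynomial maps, which is handled by the single-step shift above. Since constant maps into $H$ are trivially continuous, the polynomial hypothesis plugs directly into (5) with nothing left to prove; and if one prefers to work with a polynomial of finite degree $\leqslant k$, then the same $k$ works uniformly for every sequence, so that case is subsumed as well.
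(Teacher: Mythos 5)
Your proposal is correct and follows exactly the route the paper intends: apply Theorem~\ref{thm:differentials} to the left-translation action $G\curvearrowright G$ and verify condition (5), which holds because the polynomial hypothesis makes some iterated differential constantly $1_H$, hence continuous. The index shift you perform is the only bookkeeping needed, and it is handled correctly.
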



\section{Measurable cocycles}
In this section we prove Theorem \ref{thm:cocyclemain}. Observe first that, if  $G\times X\maps\psi H$ is a cocycle associated with a group action  $G\curvearrowright X$, then by applying the cocycle equation to $g=f=1$ we have that $\psi(1,x)=1$ for all $x\in X$.

\begin{lemme}\label{lem:measurability}
Let $(\Omega,\ku S)$ be a measurable space and $X$, $Y$ be separable metrisable topological spaces. Assume also that $\Omega \times X\maps \psi Y$ is a map such that $\psi_\omega\colon X\to Y$ is continuous for all $\omega\in \Omega$, whereas $\psi_x\colon \Omega\to Y$ is measurable for a dense set of $x\in X$. Then $\psi_x\colon \Omega\to Y$ is measurable for all $x\in X$.
\end{lemme}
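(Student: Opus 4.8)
The plan is to fix an arbitrary $x_0\in X$ and realise $\psi_{x_0}$ as a pointwise limit of measurable maps. Write $D=\{x\in X\mid \psi_x\colon\Omega\to Y\text{ is measurable}\}$; by hypothesis $D$ is dense, and since $X$ is metrisable, hence first countable, there is a sequence $x_n\in D$ with $x_n\to x_0$. Now fix $\omega\in\Omega$. Since the section $\psi_\omega\colon X\to Y$ is continuous, $\psi_{x_n}(\omega)=\psi_\omega(x_n)\to\psi_\omega(x_0)=\psi_{x_0}(\omega)$. As $\omega$ was arbitrary, $\psi_{x_0}=\lim_n\psi_{x_n}$ pointwise on $\Omega$, with each $\psi_{x_n}$ measurable.

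So everything reduces to the following standard fact: if $f_n\colon\Omega\to Y$ are measurable maps into a metrisable space and $f_n\to f$ pointwise, then $f$ is measurable. To see this, fix a compatible metric $d$ on $Y$ and let $F\subseteq Y$ be closed. The function $y\mapsto d(y,F)$ is continuous, so each $\omega\mapsto d(f_n(\omega),F)$ is measurable; and because $f_n(\omega)\to f(\omega)$, the real-valued functions $d(f_n(\,\cdot\,),F)$ converge pointwise to $d(f(\,\cdot\,),F)$, which is therefore measurable as a pointwise limit of measurable real functions. Hence
$$
f^{-1}(F)=\bigl\{\omega\in\Omega\mid d\bigl(f(\omega),F\bigr)=0\bigr\}\in\ku S.
$$
Since the closed subsets of $Y$ generate its Borel $\sigma$-algebra, the family $\{B\subseteq Y\mid f^{-1}(B)\in\ku S\}$ is a $\sigma$-algebra containing every Borel set, and so $f$ is measurable.

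Applying this to $f_n=\psi_{x_n}$ and $f=\psi_{x_0}$ shows that $\psi_{x_0}$ is measurable, and since $x_0\in X$ was arbitrary, the lemma follows. There is really no serious obstacle here: the one non-formal ingredient is the stability of measurability under pointwise limits for maps into a metric space, and it is only through this — via the reduction to closed generators of the Borel $\sigma$-algebra — that the metric structure of $Y$ is used; the role of $X$ is merely to supply, at each point, a sequence from the dense set $D$ converging to it, for which first countability already suffices.
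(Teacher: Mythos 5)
Your argument is correct, and it reaches the conclusion by a somewhat different route than the paper. The paper fixes a countable dense set $\ku X\subseteq X$ of points with measurable sections and, for each open $W\subseteq Y$, writes $\psi_x\inv(W)$ directly as an explicit countable union/intersection of the sets $\psi_z\inv(V)$ for $z\in\ku X$, using a countable family $\ku V$ of open sets with $W=\bigcup_{V\in\ku V} V=\bigcup_{V\in\ku V}\ov V$ to absorb the closure that inevitably appears when one passes to the limit along the dense set. You instead isolate that limiting step as the standard lemma that a pointwise limit of measurable maps into a metrisable space is measurable, and prove it via the distance functions $d(\,\cdot\,,F)$ to closed sets. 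Both proofs rest on the same core observation --- continuity of $\psi_\omega$ makes $\psi_{x_0}$ a pointwise limit of sections taken over the dense set --- but your packaging is more modular, and it is marginally more economical in its hypotheses: you use only first countability of $X$ (to extract a convergent sequence from the dense set) and a compatible metric on $Y$, with no appeal to separability, whereas the paper's formula requires a countable dense subset of the given dense set of $x$'s. What the paper's version buys in exchange is an explicit Boolean expression for $\psi_x\inv(W)$ in terms of the $\psi_z\inv(V)$, which makes the descriptive complexity of the preimage transparent; your reduction hides this inside the limit lemma. Either way, the lemma is established.
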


\begin{proof}
Let $\ku X\subseteq X$ be a countable dense subset such that $\psi_z$ is $\ku S$-measurable for all $z\in \ku X$.
Suppose also $W\subseteq Y$ is a given non-empty open set and find a countable collection $\ku V$ of open subsets $V\subseteq W$ so that $W=\bigcup_{V\in \ku V}V=\bigcup_{V\in \ku V}\ov V$. Also, for a given point $x\in X$, let $\ku N$ be a countable neighbourhood basis for $x$. 

Then, for any $\omega\in \Omega$, we have by the continuity in the second variable that
\maths{
\omega\in \psi_x\inv(W)
&\equi \psi(\omega,x)\in W\\
&\equi \e V\in \ku V\;\; \psi(\omega,x)\in V\\
&\equi \e V\in \ku V\;\; \e U\in \ku N\;\; \psi\big[\{\omega\}\times U\big]\subseteq  V\\
&\saa \e V\in \ku V\; \;\e U\in \ku N\;\; \a z\in U\cap \ku X\; \;\psi(\omega,z)\in  V\\
&\equi \omega\in \bigcup_{V\in \ku V}\;\bigcup_{U\in \ku N}\;\bigcap_{z\in U\cap  \ku X}\; \psi_z\inv(V) \\
&\saa \e V\in \ku V\; \;\e U\in \ku N\;\; \psi\big[\{\omega\}\times U\big]\subseteq  \ov V\\
&\saa \e V\in \ku V\; \;\psi(\omega,x)\in \ov V\\
&\saa \omega\in \psi_x\inv(W).
}
It thus follows that
$$
\psi_x\inv(W)= \bigcup_{V\in \ku V}\;\bigcup_{U\in \ku N}\;\bigcap_{z\in U\cap  \ku X}\; \psi_z\inv(V)
$$
is $\ku S$-measurable.
\end{proof}

\begin{lemme}\label{lem:cocycle}
Let $G\curvearrowright X$ be a continuous action by a topological group $G$ on a topological space $X$ and suppose that $G\times X\maps\psi H$ is a cocycle with values in a topological group $H$ such that $\psi$ is continuous in the second variable. Assume also that, for some $x\in X$, the map $\psi_x$ is continuous at a single point in $G$. Then $\psi_x$ is continuous at every point of $G$.
\end{lemme}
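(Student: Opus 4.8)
The plan is to reduce the statement to the special case of continuity at the identity $1_G$, and then to propagate continuity from $1_G$ to an arbitrary point of $G$; both steps will be driven purely by the cocycle equation $\psi(gf,x)=\psi(g,fx)\cdot\psi(f,x)$ together with the two standing continuity hypotheses, namely continuity of $\psi$ in the second variable and continuity of the action $G\curvearrowright X$. Since $G$, $X$ and $H$ are only assumed to be topological (not metrisable) spaces, I would phrase everything in terms of nets.

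First I would show that if $\psi_x$ is continuous at some $g_0\in G$, then $\psi_x$ is continuous at $1_G$. Fix a net $h_i\to 1_G$ in $G$. Substituting $g=g_0$ and $f=h_i$ into the cocycle equation gives $\psi(g_0h_i,x)=\psi(g_0,h_ix)\cdot\psi(h_i,x)$, hence $\psi(h_i,x)=\psi(g_0,h_ix)\inv\cdot\psi(g_0h_i,x)$. Now $g_0h_i\to g_0$ since left translation by $g_0$ is a homeomorphism of $G$, so $\psi(g_0h_i,x)\to\psi(g_0,x)$ by continuity of $\psi_x$ at $g_0$; and $h_ix\to 1_G\cdot x=x$ by continuity of the action, so $\psi(g_0,h_ix)\to\psi(g_0,x)$ by continuity in the second variable. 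Using that multiplication and inversion in the topological group $H$ are continuous, we conclude $\psi(h_i,x)\to\psi(g_0,x)\inv\psi(g_0,x)=1_H$, which equals $\psi(1_G,x)$ as observed at the start of the section. Since the net was arbitrary, $\psi_x$ is continuous at $1_G$.

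Next I would propagate continuity to a general point: fix an arbitrary $g_1\in G$ and a net $g_i\to g_1$, and set $h_i=g_1\inv g_i$, so that $h_i\to 1_G$. The cocycle equation yields $\psi(g_i,x)=\psi(g_1h_i,x)=\psi(g_1,h_ix)\cdot\psi(h_i,x)$. By the first step $\psi(h_i,x)\to 1_H$, while $h_ix\to x$ forces $\psi(g_1,h_ix)\to\psi(g_1,x)$ by continuity in the second variable; multiplying these gives $\psi(g_i,x)\to\psi(g_1,x)$. Hence $\psi_x$ is continuous at $g_1$.

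I do not anticipate a genuine obstacle here, as the argument is short and purely formal. The one point requiring a little care is the general topological setting: this is why the reasoning is carried out with nets rather than sequences, and why it uses in an essential way that $G$ and $H$ are topological groups, so that translations and the group operations are continuous along the relevant nets.
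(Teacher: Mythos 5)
Your proof is correct and follows essentially the same route as the paper's: both propagate continuity from one point of $G$ to every other point by rewriting $\psi(k_i,x)$ via the cocycle identity and passing to limits along nets, using continuity in the second variable together with continuity of the action. The only cosmetic difference is that you factor through the identity $1_G$ (which costs one extra appeal to continuity of inversion in $H$), whereas the paper transfers continuity from $f$ directly to an arbitrary point $gf$ in a single step without inverting.
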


\begin{proof}
Suppose $\psi_x$ is continuous at some point $f\in G$ and let $g\in G$ be any given point. To see that $\psi_x$ is also continuous at $gf$, suppose that $k_i\to gf$ is a convergent net.  Then $g\inv k_i\to f$ and hence
\maths{
\lim_i{\psi}(k_i,x)
&=\lim_i{\psi}(gg\inv k_i,x)\\
&=\lim_i\Big({\psi}(g ,g\inv k_ix)\;\cdot\; {\psi}(g\inv k_i, x)\Big)\\
&=\lim_i{\psi}(g ,g\inv k_ix)\;\cdot\;\lim_i {\psi}(g\inv k_i, x)\\
&={\psi}(g, fx)\cdot {\psi}(f ,x)\\
&={\psi}(gf,x),
}
showing continuity of $\psi_x$ at $gf$.
\end{proof}

\begin{lemme}\label{lem:coc}
Let $G\curvearrowright X$ be a continuous action by a topological group $G$ on a topological space $X$ and suppose that $G\times X\maps\psi H$ is a cocycle with values in a Polish group $H$ such that $\psi$ is continuous in the second variable. Then, if $\psi$ is continuous at some point $(f,x)$, $\psi$ is in fact continuous at every point of $G\times Gx$.
\end{lemme}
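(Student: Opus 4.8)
The plan is to propagate the single point of continuity using the cocycle identity together with left translations in $G$ and the hypothesis that $\psi$ is continuous in the second variable; throughout I would argue with convergent nets and use freely that multiplication and inversion in the topological group $H$ are continuous. The first step is to show that continuity of $\psi$ at $(f,x)$ forces continuity of $\psi$ at $(af,x)$ for every $a\in G$; since $a$ is arbitrary, this already yields continuity of $\psi$ at $(g,x)$ for every $g\in G$. Indeed, if $(k_i,z_i)\to(af,x)$, then $(a\inv k_i,z_i)\to(f,x)$, so $\psi(a\inv k_i,z_i)\to\psi(f,x)$ by the assumed continuity, while $a\inv k_i z_i\to fx$ by continuity of the action, whence $\psi(a,a\inv k_i z_i)\to\psi(a,fx)$ by continuity in the second variable. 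Multiplying these and using the cocycle identity in the form $\psi(k_i,z_i)=\psi(a,a\inv k_i z_i)\,\psi(a\inv k_i,z_i)$ on the one side and $\psi(af,x)=\psi(a,fx)\,\psi(f,x)$ on the other gives $\psi(k_i,z_i)\to\psi(af,x)$.

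Next, given arbitrary $g,h\in G$, I would deduce continuity of $\psi$ at $(g,hx)$. Take a net $(k_i,z_i)\to(g,hx)$. Then $k_i h\to gh$, and $h\inv z_i\to x$ by continuity of the action, so $(k_i h,h\inv z_i)\to(gh,x)$ and hence $\psi(k_i h,h\inv z_i)\to\psi(gh,x)$ by the first step; also $\psi(h\inv,z_i)\to\psi(h\inv,hx)$ by continuity in the second variable. The cocycle identity $\psi(k_i,z_i)=\psi(k_i h,h\inv z_i)\,\psi(h\inv,z_i)$ then shows $\psi(k_i,z_i)\to\psi(gh,x)\,\psi(h\inv,hx)$. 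Finally one checks that this limit is exactly $\psi(g,hx)$: from $\psi(gh,x)=\psi(g,hx)\,\psi(h,x)$ and $\psi(h\inv,hx)=\psi(h,x)\inv$ (the latter because $1=\psi(1,x)=\psi(h\inv h,x)=\psi(h\inv,hx)\,\psi(h,x)$), the two factors telescope to $\psi(g,hx)$.

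There is no serious obstacle here; the only point requiring care is the bookkeeping in the second step, namely pairing the correct translate $k_i h$ with $h\inv z_i$ so that the decomposition feeds back into the first step, and then verifying that the product of the two resulting limits collapses to $\psi(g,hx)$ rather than to some conjugate of it. It is also worth noting that the argument uses only the continuity of the group operations of $H$ and of the action $G\times X\to X$, together with continuity of $\psi$ in the second variable.
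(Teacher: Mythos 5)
Your argument is correct and follows essentially the same route as the paper's proof: first propagate continuity from $(f,x)$ to $(af,x)$ for all $a\in G$ via the decomposition $\psi(k_i,z_i)=\psi(a,a^{-1}k_iz_i)\,\psi(a^{-1}k_i,z_i)$, then to $(g,hx)$ via $\psi(k_i,z_i)=\psi(k_ih,h^{-1}z_i)\,\psi(h^{-1},z_i)$, with the final telescoping identity matching the paper's computation up to rearrangement.
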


\begin{proof}
We first show that, if the cocycle $\psi$ is continuous at a point $(f,x)$, then it is also continuous at all points $(gf,x)$ for $g\in G$. To see this, suppose that $k_i\to gf$ and $x_i\to x$ are convergent nets.  Then $g\inv k_i\to f$ and hence
\maths{
\lim_i{\psi}(k_i,x_i)
&=\lim_i{\psi}(gg\inv k_i,x_i)\\
&=\lim_i\Big({\psi}(g ,g\inv k_ix_i)\;\cdot\; {\psi}(g\inv k_i, x_i)\Big)\\
&=\lim_i{\psi}(g ,g\inv k_ix_i)\;\cdot\;\lim_i {\psi}(g\inv k_i, x_i)\\
&={\psi}(g, fx)\cdot {\psi}(f ,x)\\
&={\psi}(gf,x).
}

We next show that, if the cocycle $\psi$ is continuous at a point $(f,x)$, then it is also continuous at all points $(f,gx)$ for $g\in G$. To see this, suppose that $f_i\to f$ and $y_i\to gx$.  Then $f_ig\to fg$ and $g\inv y_i\to x$. Because  $\psi$ is continuous at $(fg,x)$, it follows that\maths{
\lim_i{\psi}(f_i,y_i)
&=\lim_i{\psi}(f_igg\inv, y_i)\\
&=\lim_i\Big({\psi}(f_ig ,g\inv y_i)\;\cdot\; {\psi}(g\inv , y_i)\Big)\\
&=\lim_i{\psi}(f_ig ,g\inv y_i)\;\cdot\; \lim_i{\psi}(g\inv , y_i)\\
&={\psi}(fg, x)\cdot {\psi}(g\inv ,gx)\\
&={\psi}(fg, g\inv gx)\cdot {\psi}(g\inv ,gx)\\
&={\psi}(fgg\inv ,gx)\\
&={\psi}(f,gx).
}
The lemma now follows from the conjunction of these two facts.
\end{proof}

\begin{proof}[Proof of Theorem  \ref{thm:cocyclemain} (2)]
Recall that we are given a Polish group action $G\curvearrowright X$  and a cocycle $G\times X\maps\psi H$  with values in a Polish group $H$ such that $\psi$ is continuous in the second variable. Furthermore, $\psi_x$ is assumed to be Baire measurable for a dense set of $x\in X$. Thus, by Lemma \ref{lem:measurability}, $\psi_x$ will be Baire measurable for all $x\in X$.

We first show that, for any $x\in  X$, the function $\psi_x$ has a point of continuity in  $G$. To see this, note that, for any fixed $g\in G$, 
$\psi_x$ is continuous at $g$ if and only if, for every open identity neighbourhood $W\subseteq H$, there is some $h\in H$ such that
$$
g\in {\sf int}\Big(\psi_x\inv(hW^2)\Big).
$$
Thus, by the Baire category theorem and  first countability of $H$, it suffices to show that, for every open identity neighbourhood $W\subseteq H$,  the open set 
$$
\bigcup_{h\in H}{\sf int}\Big(\psi_x\inv(hW^2)\Big)
$$
is dense in $G$. 

So let $W$ be given and fix some countable dense subset $\ku H\subseteq H$. Assume also that $O\subseteq G$ is a given non-empty open set and find non-empty open subsets $U_1,U_2\subseteq G$ so that $U_1U_2\subseteq O$. 
Then
$$
U_2\subseteq \bigcup_{h\in \ku H}\psi_x\inv(hW)
$$
and hence there must be some $h_1\in \ku H$   such that $U_2\cap \psi_x\inv(h_1W)$ is nonmeagre. 

Choose now some open identity neighbourhood $W'\subseteq H$ so that $\ov{W'}h_1\subseteq h_1W$ and fix an element $f\in U_2\cap \psi_x\inv(h_1W)$ so that $U_2\cap \psi_x\inv(h_1W)$ is comeagre in a neighbourhood of $f$.
As above, we may then find some $h_2\in \ku H$   so that $U_1\cap \psi_{fx}\inv(h_2W')$ is nonmeagre.

Fix  a countable neighbourhood basis $\ku V$ for $f$ consisting of open sets and a countable dense subset $\ku G$ of $G$. We claim that
\maths{
\psi_{fx}\inv(h_2W')
&\;\subseteq\;      \bigcup_{V\in \ku V}\Mgd{g\in G}{\a k\in V \; \psi(g,kx)\in h_2{W'}}.
}
Indeed, suppose that $g\in \psi_{fx}\inv(h_2W')$ is given. Then $\psi(g,fx)\in h_2W'$ and so, because $\psi$ is continuous in the second variable, there is some $V\in \ku V$ such that $\psi(g,kx)\in h_2W'$ for all $k\in V$, which proves the claim. 
Because  $U_1\cap \psi_{fx}\inv(h_2W')$ is nonmeagre, it  follows that there is some $V\in \ku V$ such that 
$$
U_1\cap \Mgd{g\in G}{ \a k\in V \;\psi(g,kx)\in h_2W'}
$$
is nonmeagre.

Observe also that
\maths{
\Mgd{g\in G}{\a k\in V \; \psi(g,kx)\in h_2{W'}}
&\;\subseteq\; 
\bigcap_{k\in  V\cap \ku G} \psi_{kx}\inv(h_2W')\\
&\;\subseteq\;   
\Mgd{g\in G}{\a k\in V \; \psi(g,kx)\in h_2\ov{W'}}.
}
Indeed, the first inclusion is immediate and to verify the last inclusion assume $\psi(g,kx)\in h_2W'$ for all $k\in V\cap \ku G$.
Then every $k\in V$ is the limit of some sequence $(k_n)\subseteq V\cap \ku G$ and therefore
$$
\psi(g,kx)=\lim_n\psi(g,k_nx)\in \ov{h_2W'}=h_2\ov {W'}
$$
as claimed. We thus conclude that 
$$
U_1\cap  \bigcap_{k\in  V\cap \ku G} \psi_{kx}\inv(h_2W')
$$
is nonmeagre.

Observe that the two factors in the product 
$$
A=\Big(U_1\cap \bigcap_{k\in  V\cap \ku G} \psi_{kx}\inv(h_2W')\Big)
\cdot 
\Big(U_2\cap V\cap \psi_x\inv(h_1W)\Big)
$$
are both nonmeagre sets with the Baire property. By the Pettis theorem \cite{pettis} (see \cite[Lemma 2.1]{autom} for  the exact statement needed), the product $A$  therefore has non-empty interior. Moreover, for 
$$
g\;\in\; U_1\cap \bigcap_{k\in  V\cap \ku G} \psi_{kx}\inv(h_2W') 
$$
and 
$$
k\in U_2\cap V\cap \psi_x\inv(h_1W),
$$
we have
\maths{
\psi(gk,x)
&=\psi(g,kx)\cdot\psi(k,x)\\
&\in h_2\ov{W'} \cdot h_1W\\
&\subseteq h_2h_1W^2.
}
In other words, $A\subseteq \psi_x\inv \big(h_2h_1W^2\big)$. As also  $\tom\neq {\sf int}\, A\subseteq A\subseteq U_1U_2\subseteq O$, we see that $\bigcup_{h\in H}{\sf int}\Big(\psi_x\inv(hW^2)\Big)$
is dense in $G$.

We have shown that, for every $x\in X$, the map $\psi_x$ is continuous at some point of $G$. Therefore, by Lemma \ref{lem:cocycle}, $\psi_x$ is continuous at every point of $G$. By the assumptions of the theorem, $G\times X\maps\psi H$ is separately continuous. 

We may now apply \cite[Theorem 8.51]{kechris} to conclude that there is a comeagre subset $Z\subseteq G\times X$  such that the section 
$$
Z^x=\Mgd{g\in G}{ (g,x)\in Z}
$$
is comeagre for every $x\in X$ and so that $\psi$ is continuous at every point of $Z$. In particular, this shows that, for all $x\in X$ there is some $g\in G$ such that $\psi$ is continuous at $(g,x)$. By Lemma \ref{lem:coc}, it follows that $\psi$ is continuous at all points of $G\times X$.
\end{proof}

\begin{proof}[Proof of Theorem  \ref{thm:cocyclemain} (3)]
The proof of this is very similar to that of Theorem  \ref{thm:cocyclemain} (2), so we will keep the same notation and just point out the small changes that are needed. We begin by fixing a left-invariant Haar measure $\lambda$ on $G$.

As before, we must show that the open set $\bigcup_{h\in H}{\sf int}\Big(\psi_x\inv(hW^2)\Big)$ intersects some given non-empty open set $O\subseteq G$. We choose $U_1U_2\subseteq O$ as before and find $h_1\in \ku H$   so that $U_2\cap \psi_x\inv(h_1W)$ is nonnull. 

Again we find $W'$ satisfying $\ov{W'}h_1\subseteq h_1W$. Define now 
$$
D=\bigcup\Mgd{U\subseteq X \text{ open }}{\lambda\big(U\cap U_2\cap \psi_x\inv(h_1W)\big)=0}.
$$
Then $D$ is open and $\lambda\big(D\cap U_2\cap \psi_x\inv(h_1W)\big)=0$. Because $U_2\cap \psi_x\inv(h_1W)$ is non-null, we may therefore find some $f\in \big(U_2\cap \psi_x\inv(h_1W)\big)\setminus D$, which means that $U_2\cap \psi_x\inv(h_1W)$ has non-null intersection with every neighbourhood of $f$. 

Choose then  $h_2\in \ku H$   so that $U_1\cap \psi_{fx}\inv(h_2W')$ is non-null and find an open neighbourhood $V$ of $f$ so that
$$
U_1\cap  \bigcap_{k\in  V\cap \ku G} \psi_{kx}\inv(h_2W')
$$
is non-null. Then  the two factors in the product 
$$
\Big(U_1\cap \bigcap_{k\in  V\cap \ku G} \psi_{kx}\inv(h_2W')\Big) 
\cdot 
\Big(U_2\cap \psi_x\inv(h_1W)\cap V\Big) \;\;\subseteq\;\; \psi_x\inv \big(h_2h_1W^2\big)
$$
are both non-null Haar measurable sets. It follows therefore that the product  has non-empty interior (see,  e.g., \cite[Theorem 2.3]{autom}). The rest of the proof remains unaltered.
\end{proof}

\begin{proof}[Proof of Theorem  \ref{thm:cocyclemain} (1)]
Let $\ku W$ be a countable basis for the topology on $H$. Then, for every $W\in \ku W$, the inverse image  $\psi\inv(W)$ has the Baire property in $G\times X$ and therefore, by the  Kuratowski--Ulam  theorem \cite[Theorem 8.41]{kechris}, there is a comeagre subset $Z_W\subseteq X$ such that 
$$
\psi\inv(W)_z=\Mgd{g\in G}{(g,z)\in \psi\inv(W)}=\psi_z\inv(W)
$$
has the Baire property in $G$ for all $z\in Z_W$. Thus, for all $z$ belonging to the comeagre set $\bigcap_{W\in \ku W}Z_W$ and all $W\in \ku W$, $\psi_z\inv(W)$ has the Baire property, whereby  $\psi_z$ is Baire measurable. The result now follows from Theorem  \ref{thm:cocyclemain} (2).
\end{proof}

\begin{proof}[Proof of Theorem  \ref{thm:cocyclemain} (4)]
Let $\ku W$ be a countable basis for the topology on $H$. Then, for every $W\in \ku W$, the inverse image  $\psi\inv(W)$ is a $\lambda\times \mu$-measurable subset of $G\times X$. Because both $\lambda$ and $\mu$ are $\sigma$-finite Borel measures, it follows from  the  Fubini--Tonelli  theorem that there is a $\mu$-conull subset $Z_W\subseteq X$ such that,  for all $z\in Z_W$,
$$
\psi\inv(W)_z=\Mgd{g\in G}{(g,z)\in \psi\inv(W)}=\psi_z\inv(W)
$$
is a $\lambda$-measurable subset of $G$. Thus, for all $z$ belonging to the $\mu$-conull set $Z=\bigcap_{W\in \ku W}Z_W$ and all $W\in \ku W$, $\psi_z\inv(W)$ is $\lambda$-measurable. So  $\psi_z$ is $\lambda$-measurable for all $z\in Z$. Because $\mu$ is fully supported, we see that $Z$ is dense in $X$ and the conclusion now follows from Theorem  \ref{thm:cocyclemain} (3).
\end{proof}


\section{Differentials and polynomial mappings}

\begin{proof}[Proof of Theorem \ref{thm:differentials}]
Assume $G\acts {}X$ is a transitive Polish group action and that $X\maps \phi H$ is a Baire measurable function with values in a Polish group $H$.

We first show that the differentials $G\times X\maps{d\phi}H$ and  $X\maps{d_g\phi}H$ are Baire measurable for all $g\in G$.  So let $g\in G$ be fixed and let $U\subseteq H$ be open. Because the group operation in $H$ is continuous, we have that 
$$
\Mgd{(h,k)\in H\times H}{ h\inv k\in  U}=\bigcup_nV_n\times W_n
$$
for some sequence of open sets $V_n,W_n\subseteq H$. If $G\times X\maps{\sf a} X$ denotes the  action map, it follows that
\maths{
\big(d_g\phi\big)\inv(U)
&=\bigcup_{n}\Mgd{x\in X}{\phi(gx)\in V_n \;\&\; \phi(x)\in W_n}\\
&=\bigcup_n \Big( \phi\inv(W_n)\cap g\inv\!\cdot \!\phi\inv(V_n)\Big)
}
and 
\maths{
\big(d\phi\big)\inv(U)
&=\bigcup_n\big\{(f,x)\in G\times X\del  fx\in \phi\inv (V_n)\;\&\;x\in \phi\inv(W_n)\big\}\\
&=\bigcup_n \Big({\sf a}\inv\big(\phi\inv (V_n)\big)\cap \big( G\times \phi\inv(W_n)\big)\Big).\\
}
Because $\phi$ is Baire measurable, the sets $\phi\inv (V_n)$ and $\phi\inv (W_n)$ have the Baire property. This shows that $d_g\phi$ is Baire measurable. Furthermore, because the action map $G\times X\maps{\sf a} X$  is surjective, continuous and open, the inverse image ${\sf a}\inv\big(\phi\inv (V_n)\big)$ will also have the Baire property, whereby $d\phi$ is Baire measurable.

Secondly, it is immediate that (1) implies (2), (3), (4) and (5).
Conversely, to see that (2) implies (1), suppose that $d\phi$ is continuuous and that $x_n\to x$. Since the action $G\curvearrowright X$ is transitive, the orbit map $g\in G\mapsto gx\in X$ is open by Effros' theorem \cite[Theorem 2.1]{effros} and so there is a sequence $(g_n)$ in $G$ converging to $1$ with $x_n=g_nx$. So, by the continuity of $d\phi$, we have that
$$
\lim_n \phi(x_n)\inv\phi(x)
=\lim_n \phi(g_nx)\inv\phi(x)
=\lim_nd\phi(g_n,x)
=d\phi(1,x)=1,
$$
whence $\phi(x_n)\to \phi(x)$, showing continuity of $\phi$.

Observe also that, because  $\big(d\phi\big)_g=d_g\phi$, each of (3) and (4) imply that the set
$$
\Sigma=\Mgd{g\in G}{(d\phi)_g \text{ is continuous\,}}
$$
must generate $G$ and hence, by Remark \ref{remark} and Theorem \ref{thm:cocyclemain}(1), that $d\phi$ is continuous. So (3) and (4) each imply (2).

To see that (5) implies (1), assume that (1) fails. Then, by the implication (4)$\saa$(1), there is some $g_1\in G$ so that also $d_{g_1}\phi$ is discontinuous and still is Baire measurable. Continuing like this, we may in fact choose an infinite sequence $g_1,g_2,g_3,\cdots$  so that $d_{g_k}\cdots d_{g_1}\phi$ is discontinuous for all $k\geqslant 0$, whereby also (5) fails.
\end{proof}

\begin{bibdiv}
\begin{biblist}


\bib{becker}{article}{
   author={Becker, Howard},
   title={Cocycles and continuity},
   journal={Trans. Amer. Math. Soc.},
   volume={365},
   date={2013},
   number={2},
   pages={671--719},
   issn={0002-9947},
}




\bib{effros}{article}{
   author={Effros, Edward G.},
   title={Transformation groups and $C\sp{\ast} $-algebras},
   journal={Ann. of Math. (2)},
   volume={81},
   date={1965},
   pages={38--55},
   issn={0003-486X},
}






\bib{kechris}{book}{
   author={Kechris, Alexander S.},
   title={Classical descriptive set theory},
   series={Graduate Texts in Mathematics},
   volume={156},
   publisher={Springer-Verlag, New York},
   date={1995},
   pages={xviii+402},
   isbn={0-387-94374-9},
}


\bib{leibman}{article}{
   author={Leibman, A.},
   title={Polynomial mappings of groups},
   journal={Israel J. Math.},
   volume={129},
   date={2002},
   pages={29--60},
   issn={0021-2172},
}

\bibitem{meyerovitch} T. Meyerovitch and  O. N. Solan, {\em Automatic continuity of polynomial maps and cocycles}, 
to appear in ``Proceedings of the American Mathematical Society.''


\bib{pettis}{article}{
   author={Pettis, B. J.},
   title={On continuity and openness of homomorphisms in topological groups},
   journal={Ann. of Math. (2)},
   volume={52},
   date={1950},
   pages={293--308},
   issn={0003-486X},
 }

\bib{autom}{article}{
   author={Rosendal, Christian},
   title={Automatic continuity of group homomorphisms},
   journal={Bull. Symbolic Logic},
   volume={15},
   date={2009},
   number={2},
   pages={184--214},
   issn={1079-8986},
}





\end{biblist}
\end{bibdiv}

\end{document}